\documentclass[11pt]{amsart}
\usepackage[top=1in, bottom=1in, left=1in, right=1in]{geometry}
\usepackage[T1]{fontenc} 
\usepackage[utf8]{inputenc} 
\usepackage{amsmath, amsthm, amsfonts}
\usepackage{enumerate}
\usepackage{hyperref}
\usepackage{amssymb}
\usepackage{lipsum}
\usepackage{xcolor}
\usepackage{fancyhdr, textcomp}
\usepackage{tikz, tikz-cd, quiver}
\usepackage{graphicx}
\usepackage[all]{xy}

\newtheorem{thm}{Theorem}[]

\newtheorem{lem}[thm]{Lemma}

\theoremstyle{definition}

\theoremstyle{remark}

\newcommand{\HS}{\text{HS}}

\makeatletter
\let\origsetaddresses\@setaddresses

\def\@setaddresses{}
\makeatother

\allowdisplaybreaks
\title{An algebraic proof of the Milnor-Orlik theorem}

\author{Yerly Soler}
\address{CENTRO DE CIENCIAS MATEMÁTICAS, UNAM, Morelia, México}
\email{yerly@matmor.unam.mx}

\date{}
\begin{document}

\thanks{{\it 2020 Mathematics Subject Classification}: 14B05,32S25,13D02,13D40.  \\
\mbox{\hspace{11pt}}{\it Key words}: Milnor number, weighted homogeneous polynomials, Hilbert series, Koszul complex.\\
\mbox{\hspace{11pt}}Research supported by SECIHTI project CF-2023-G33.}

\begin{abstract}
A well-known theorem by Milnor-Orlik provides a formula for the Milnor number of a weighted-homogeneous polynomial having an isolated singularity that depends only on the weights. In this paper we present a proof of that result  using techniques from commutative algebra. Our approach is to obtain a free resolution of the Milnor algebra through the Koszul complex. The desired formula is then obtained from a Hilbert series calculation.
\end{abstract}

\maketitle

\section{Introduction}
The Milnor number is a fundamental analytic invariant associated to a complex hypersurface isolated singularity. When the defining equation of the hypersurface is a weighted-homogeneous polynomial, Milnor and Orlik provided an explicit formula for computing the Milnor number that depends only on the weight and not on the polynomial itself \cite{MILNOR1970}. This result, established in 1970, was proved using techniques from algebraic topology and relied on the topological structure of the corresponding Milnor fiber. Since it is known that the Milnor fiber has the homotopy type of a bouquet of spheres, their approach consists on the computation of the rank of a certain homology group.

Besides the original proof, in the literature there are at least two other approaches to the Milnor-Orlik Theorem. One of them is the geometric approach of Eyral and Oka \cite{Eyral_Oka2024}, which establishes a weak version of that theorem. They show that, under certain hypothesis, the monodromy zeta-function (and hence the Milnor number) is completely determined by the weight. The other is a combinatorial proof by Boubakri, Greuel and Markwig \cite{GreuelMarkwig2012}. In their work, a positive-characteristic version of the Milnor-Orlik Theorem is given using the Newton polyhedron as the main tool. 

In this paper we present an algebraic proof of the Milnor-Orlik Theorem. In summary, the strategy is the following. For a polynomial defining a hypersurface with an isolated singularity, we use the fact that its first-order partial derivatives form a regular sequence. Consequently, the Koszul complex gives a free resolution of the Milnor algebra. Using the additivity of Hilbert series, a few elementary computations lead to the desired formula.
\section{Preliminaries}
Let $f:(\mathbb{C}^r,0)\rightarrow(\mathbb{C},0)$ be a holomorphic function with an isolated singularity at the origin. The \textit{Milnor algebra}  of $f$ is defined as 
$$M_f=\dfrac{\mathbb{C}\{x_1,\ldots,x_r\}}{\langle f_1,\ldots,f_r\rangle},$$ 
where $f_i:=\partial f/\partial x_i$ for $i=1,\ldots,r.$ The \textit{Milnor number of} $f$ is given by $\mu(f)=\dim_{\mathbb{C}}M_f.$

\medskip

A polynomial $f\in\mathbb{C}[x_1,\ldots,x_r]$ is called \textit{weighted homogeneous} if there exists a vector $w=(w_1,\ldots,w_r)\in\mathbb{Q}^r_{>0}$ and $d\in\mathbb{N}$ such that every monomial $x^{\alpha}$ of $f$ with nonzero coefficient satisfies $\sum_{i=1}^{r}\alpha_iw_i=d$. In this case we say that $f$ is of type $(w_1,\ldots,w_r;d).$ We call  $w_1,\ldots,w_r$ the weights of the variables and $d$ the weighted degree of $f,$ denoted as $\deg_w(f)=d$. 

Notice that every weighted-homogeneous polynomial of type $(w_1,\ldots,w_r;d)$ can also be of type $(w'_1,\ldots,w'_r;d')$ where each $w'_i\in\mathbb{N}.$ From now on we assume that all weights are positive integers. Moreover, notice that the partial derivatives $f_i$ of a weighted-homogeneous polynomial $f(x_1,\ldots,x_r)$ of type $(w_1,\ldots,w_r;d)$ is itself weighted homogeneous of type $(w_1,\ldots,w_r; d-w_i).$\medskip

Fix a weight $w=(w_1,\ldots,w_r)$. We endow the polynomial ring $S=\mathbb{C}[x_1,\ldots,x_r]$ with the $w$-\textit{grading} (or \textit{weighted grading}) determined by assigning $\deg_w(x_i)=w_i$ for each $i=1,\ldots,r.$ For a monomial $x^{m}=x_1^{m_1}\cdots x_r^{m_r},$ its weighted degree is $\deg_w(x^m)=\sum_{i=1}^r m_iw_i.$ With this grading, the ring decomposes as 
$$S=\displaystyle\bigoplus_{\alpha\in\mathbb{N}}S_{\alpha},$$
where $S_{\alpha}$ denotes the $\mathbb{C}$-vector space of all weighted homogeneous polynomials of $w$-degree $\alpha.$  \\

We can now state the Milnor-Orlik Theorem.

\begin{thm}[Milnor-Orlik \cite{MILNOR1970}]\label{thm:M-O}
    Let $f(x_1,\ldots,x_r)$ be a weighted homogeneous polynomial of type $(w_1,w_2,\ldots,w_r;d)$ defining an isolated singularity at the origin. Then the Milnor number of $f$ can be computed as follows:
    $$\mu(f)=\displaystyle\prod_{i=1}^{r}\dfrac{d-w_i}{w_i}.$$
\end{thm}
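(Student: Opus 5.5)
The plan is to compute $\mu(f)$ as the dimension of a graded quotient of the polynomial ring $S=\mathbb{C}[x_1,\ldots,x_r]$ by reading it off a Hilbert series. Since the Milnor algebra is defined with convergent power series, the first step is to pass from $\mathbb{C}\{x\}$ to $S$. Set $A=S/\langle f_1,\ldots,f_r\rangle$. Each $f_i$ is $w$-homogeneous, so $A$ is $w$-graded. Because $f$ is weighted homogeneous with an isolated singularity at $0$, the common zero set of the $f_i$ is invariant under the $\mathbb{C}^*$-action $t\cdot x=(t^{w_1}x_1,\ldots,t^{w_r}x_r)$. Every orbit closure contains $0$, so a zero set that is isolated at $0$ must be exactly $\{0\}$. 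By the Nullstellensatz $A$ is then finite-dimensional and supported only at the maximal ideal $\mathfrak m=\langle x_1,\ldots,x_r\rangle$. Consequently $A=A_{\mathfrak m}$, and its completion agrees with that of $M_f$ (both are $\mathbb{C}[[x]]/\langle f_i\rangle$), so $\mu(f)=\dim_{\mathbb{C}}A$.

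Next, since $\dim A=0=r-r$ in the Cohen--Macaulay ring $S$, the homogeneous elements $f_1,\ldots,f_r$ form a regular sequence. Hence the Koszul complex $K_\bullet(f_1,\ldots,f_r)$ is a graded free resolution of $A$. Its $k$-th term is $\bigoplus_{|I|=k} S(-\sum_{i\in I}(d-w_i))$, because $\deg_w f_i=d-w_i$ and we twist so that the differentials have degree $0$.

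By additivity of Hilbert series along this exact graded complex, and using $\HS_S(t)=\prod_i (1-t^{w_i})^{-1}$, we get
\begin{equation*}
\HS_A(t)=\sum_{k}(-1)^k\sum_{|I|=k}t^{\sum_{i\in I}(d-w_i)}\,\HS_S(t)=\prod_{i=1}^r\frac{1-t^{d-w_i}}{1-t^{w_i}}.
\end{equation*}
Since $A$ is finite-dimensional, $\HS_A(t)$ is a polynomial, and $\dim_{\mathbb{C}}A=\HS_A(1)$. Evaluating each factor by L'H\^opital, or by writing $(1-t^{a})/(1-t)=1+t+\cdots+t^{a-1}$, gives $\frac{1-t^{d-w_i}}{1-t^{w_i}}\to\frac{d-w_i}{w_i}$ as $t\to1$. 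This yields the formula.

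The main obstacle is the first step: justifying the passage from the analytic Milnor algebra to the graded polynomial quotient, and the regularity of the partials. I would handle it with the $\mathbb{C}^*$-action argument above together with the standard fact that a quotient of $S$ supported only at $\mathfrak m$ has the same length as its analytic localization. One also needs $d>w_i$ so that each $f_i$ is nonzero of positive degree. This holds because if some $f_i=0$, the zero set would be positive-dimensional, contradicting isolatedness.
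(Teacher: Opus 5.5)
Your proposal is correct and follows the paper's proof essentially step for step. Both arguments take the Koszul complex on $f_1,\ldots,f_r$ as a graded free resolution of $S/\langle f_1,\ldots,f_r\rangle$, apply additivity of Hilbert series with $\HS_S(t)=\prod_i(1-t^{w_i})^{-1}$, and evaluate at $t=1$; your direct expansion of $\prod_i(1-t^{d-w_i})$ plays the role of Lemma 1. You also supply justifications the paper takes for granted: the $\mathbb{C}^*$-action argument identifying the analytic Milnor algebra with the polynomial quotient, regularity of the $f_i$ in $S$ via Cohen--Macaulayness, and the inequality $d>w_i$.
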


\section{An algebraic proof of Theorem \ref{thm:M-O}}

\begin{lem}\label{lem:Suma}
    Let $r, d\in\mathbb{N}$, and $w_1, w_2,\ldots,w_r\in\mathbb{N}$ such that $d\geq w_i$ for each $i$. Then the following equality holds:
    $$\prod_{i=1}^{r}(1-t^{d-w_i})=1+\displaystyle\sum_{k=1}^{r}(-1)^k\sum_{\substack{i_1<\cdots<i_k\\1\leq i_j\leq r}}t^{kd-(w_{i_1}+\cdots+w_{i_k})}.$$
\end{lem}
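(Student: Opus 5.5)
This is a formal polynomial identity, and I will prove it by expanding the product over subsets, or equivalently by induction on $r$. For each $i$ set $a_i = d - w_i$, a nonnegative integer because $d \geq w_i$. This makes $t^{a_i}$ a genuine monomial in $\mathbb{C}[t]$, although the identity would also hold in Laurent polynomials.

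The core step is the standard expansion of a product of binomials:
$$\prod_{i=1}^{r}(1 - t^{a_i}) = \sum_{I \subseteq \{1,\ldots,r\}} (-1)^{|I|} \prod_{i\in I} t^{a_i}.$$
Each factor contributes either its $1$ or its $-t^{a_i}$. Choosing the second term exactly for the indices in $I$ gives the summand indexed by $I$. To keep this rigorous I will phrase it as an induction on $r$:
\begin{enumerate}[(i)]
\item The case $r = 1$ is immediate.
\item For the inductive step, multiply the identity for $r-1$ by $(1 - t^{a_r})$.
\item Split the subsets of $\{1,\ldots,r\}$ into those not containing $r$, which come from multiplying by $1$, and those containing $r$, which come from multiplying by $-t^{a_r}$.
\end{enumerate}

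Next I group the subsets by cardinality $k = |I|$. The empty set gives the constant term $1$. A subset $I = \{i_1 < \cdots < i_k\}$ with $k \geq 1$ contributes
$$(-1)^k\, t^{\sum_j (d - w_{i_j})} = (-1)^k\, t^{kd - (w_{i_1} + \cdots + w_{i_k})}.$$
Summing over $k$ from $1$ to $r$, and over increasing $k$-tuples of indices, gives exactly the right-hand side of the statement.

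There is no real obstacle here. The only care needed is the bookkeeping: subsets of size $k$ correspond bijectively to strictly increasing tuples $i_1 < \cdots < i_k$, and the exponent collapses to $kd - \sum_j w_{i_j}$ because there are exactly $k$ copies of $d$.
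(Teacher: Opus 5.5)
Your proof is correct and follows essentially the same route as the paper. Both argue by induction on $r$: multiply by the new factor $(1-t^{d-w_{r+1}})$ and sort the resulting terms by whether the new index is used. Indexing terms by subsets $I\subseteq\{1,\ldots,r\}$ before grouping by $|I|=k$ simply makes automatic the reindexing of the $k$-sums that the paper carries out by hand.
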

\begin{proof}
We proceed by induction on \( r \). For $r = 1$, the statement is clear. Assume that the statement is true for $r,$ we shall prove it for $r+1.$ Indeed, 

\begin{align*}
       \prod_{i=1}^{r+1}(1-t^{d-w_i})=&(1-t^{d-w_{r+1}})\prod_{i=1}^{r}(1-t^{d-w_i})\\
       	{=}&(1-t^{d-w_{r+1}})\left(1+\displaystyle\sum_{k=1}^{r}(-1)^k\sum_{\substack{i_1<\cdots<i_k\\1\leq i_j\leq r}}t^{kd-(w_{i_1}+\cdots+w_{i_k})}\right).
     \end{align*}
 Expanding the product, the last expression becomes:
 $$1+\left(\displaystyle\sum_{k=1}^{r}(-1)^k\sum_{\substack{i_1<\cdots<i_k\\1\leq i_j\leq r}}t^{kd-(w_{i_1}+\cdots+w_{i_k})}\right)-t^{d-w_{r+1}}
     +\displaystyle\sum_{k=1}^{r}(-1)^{k+1}\sum_{\substack{i_1<\cdots<i_k\\1\leq i_j\leq r}}t^{d-w_{r+1}+kd-(w_{i_1}+\cdots+w_{i_k})}.$$
     
We rearrange the previous sum by putting together the terms involving $t^{d-w_i}$, $i\in\{1,\ldots,r+1\}$, and isolating the summand corresponding to $k=r$ from the second sum. Then the previous sum turns into:

\begin{align*}
      &1-\left(\displaystyle\sum_{i=1}^{r+1}t^{d-w_{i}}\right)+\left(\displaystyle\sum_{k=2}^{r}(-1)^k\sum_{\substack{i_1<\cdots<i_k\\1\leq i_j\leq r}}t^{kd-(w_{i_1}+\cdots+w_{i_k})}\right)+\\
     &+\displaystyle\sum_{k=1}^{r-1}(-1)^{k+1}\sum_{\substack{i_1<\cdots<i_k\\1\leq i_j\leq r}}t^{(k+1)d-(w_{i_1}+\cdots+w_{i_k}+w_{r+1})}+(-1)^{r+1}t^{(r+1)d-(w_{1}+\cdots+w_{r+1})}.\\
     \end{align*}
     Reindexing the fourth term gives:
     \begin{align*}
     &1-\left(\displaystyle\sum_{i=1}^{r+1}t^{d-w_{i}}\right)+\left(\displaystyle\sum_{k=2}^{r}(-1)^k\sum_{\substack{i_1<\cdots<i_k\\1\leq i_j\leq r}}t^{kd-(w_{i_1}+\cdots+w_{i_k})}\right)+\\
     &+\displaystyle\sum_{k=2}^{r}(-1)^{k}\sum_{\substack{i_1<\cdots<i_{k-1}\\1\leq i_j\leq r}}t^{kd-(w_{i_1}+\cdots+w_{i_{k-1}}+w_{r+1})}+(-1)^{r+1}t^{(r+1)d-(w_1+\cdots+w_{r+1})}=(\ast).
   \end{align*}
   On the other hand, returning to the expression we are trying to prove, notice that the $k$th term of $\displaystyle\sum_{k=1}^{r+1}(-1)^k\sum_{\substack{i_1<\cdots<i_k\\1\leq i_j\leq r+1}}t^{kd-(w_{i_1}+\cdots+w_{i_k})}$ with $2\leq k\leq r$ can also be written as $$(-1)^k\displaystyle\sum_{\substack{i_1<\cdots<i_k\\1\leq i_j\leq r}}t^{kd-(w_{i_1}+\cdots+w_{i_k})}+(-1)^k\displaystyle\sum_{\substack{i_1<\cdots<i_{k-1}\\1\leq i_j\leq r}}t^{kd-(w_{i_1}+\cdots+w_{i_{k-1}}+w_{r+1})}.$$
   Hence, the expression $(\ast)$ is equal to 
   \begin{align*}
     &=1-\left(\displaystyle\sum_{i=1}^{r+1}t^{d-w_{i}}\right)+\displaystyle\sum_{k=2}^{r}(-1)^k\sum_{\substack{i_1<\cdots<i_k\\1\leq i_j\leq r+1}}t^{kd-(w_{i_1}+\cdots+w_{i_k})}+(-1)^{r+1}t^{(r+1)d-(w_1+\cdots+w_{r+1})}\\
     &=1+\displaystyle\sum_{k=1}^{r+1}(-1)^k\sum_{\substack{i_1<\cdots<i_k\\1\leq i_j\leq r+1}}t^{kd-(w_{i_1}+\cdots+w_{i_k})}.
   \end{align*}
\end{proof}

Let us briefly recall some basic facts about shifted graded modules that we need for the proof of Theorem \ref{thm:M-O}.

Let $M=\bigoplus_{\alpha\in\mathbb{N}}M_\alpha$ be a graded $S$-module. For a positive integer $a,$ the \textit{shifted module} $M(-a)$ is defined by $M(-a)=\bigoplus_{\alpha\in\mathbb{N}}M(-a)_\alpha$ where each component is given by $M(-a)_\alpha:=M_{\alpha-a},$ and $M(-a)_j:=0$ for all $j<a.$ \medskip

\begin{proof}[Proof of Theorem \ref{thm:M-O}]

Let $S=\mathbb{C}[x_1,\ldots,x_r]$, considered as a  $w$-graded ring, where $w=(w_1,\ldots,w_r)$. Recall that the  partial derivatives $f_i$ of $f$ are homogeneous elements of $S,$ $\deg_w(f_i)=d-w_i$. Consider the Koszul complex 
$K_{\cdot}(\mathbf{x})$, where $\mathbf{x}=(f_1,\ldots,f_r)$:
\begin{center}
 \small{
 \begin{tikzcd}
 0 \arrow{r} & S(-(rd - w_1 - \ldots - w_r)) \arrow{r} & \displaystyle \bigoplus_{\substack{i_1 < \cdots< i_{r-1} \\ 1 \leq i_j \leq r}} S(-(kd - w_{i_1} - \ldots - w_{i_{r-1}})) \arrow{r} &\cdots
 \end{tikzcd}	
\begin{tikzcd}	
\cdots \arrow{r} & \displaystyle \bigoplus_{\substack{i_1 < i_2 \\ 1 \leq i_j \leq r}} S(-(2d - w_{i_1} - w_{i_2})) \arrow{r} & \displaystyle \bigoplus_{1 \leq i_1 \leq r} S(-(d - w_{i_1})) \arrow{r} & S \arrow{r} & M_f \arrow{r} & 0.
\end{tikzcd}
}
\end{center}
Since $f$ defines an isolated hypersurface singularity, $f_1,\ldots,f_r$ is a regular sequence. Therefore, the Koszul complex is an exact sequence of graded free $S$-modules.
     \medskip

Recall that $M_f$ inherits a natural $w$-grading from the graded ring $S=\displaystyle\bigoplus_{\alpha\in\mathbb{N}}S_\alpha$. Now consider the Hilbert series of $M_f$,
    $$\HS_{M_f}(t):=\sum_{\alpha=0}^\infty\dim_{\mathbb{C}}(M_f)_\alpha\ t^\alpha.$$
Applying the additivity of the Hilbert series to the previous exact sequence we obtain:
    $$\HS_{M_f}(t)=\HS_S(t)+\displaystyle\sum_{k=1}^{r}(-1)^k\sum_{\substack{i_1<\cdots<i_k\\1\leq i_j\leq r}}\HS_{S(-(kd-w_{i_1}-\ldots-w_{i_k}))}(t).$$
Since $f$ has an isolated singularity at the origin, $\dim_{\mathbb{C}}M_f$ is finite. Hence $\HS_{M_f}(t)$ has a finite number of non-zero summands. It follows that $\HS_{M_f}(1)=\dim_{\mathbb{C}}M_f=\mu(f).$ 
    \medskip
    
Let us denote $$D(\alpha):=\dim_{\mathbb{C}} S_\alpha=\lvert\{(\alpha_1,\alpha_2,\ldots,\alpha_r)\in\mathbb{Z}^{r}_{\geq0}:\ \displaystyle\sum_{i=1}^{r}w_i\alpha_i=\alpha\}\rvert.$$ 

Hence, $\HS_S(t)=\displaystyle\sum_{\alpha=0}^{\infty}D(\alpha)t^\alpha.$ From the definition of $D(\alpha),$ it follows that 
$$\HS_S(t)=\left(\displaystyle\sum_{\alpha_1=0}^{\infty}t^{w_1\alpha_1}\right)\left(\displaystyle\sum_{\alpha_2=0}^{\infty}t^{w_2\alpha_2}\right)\cdots\left(\displaystyle\sum_{\alpha_r=0}^{\infty}t^{w_r\alpha_r}\right)=\prod_{i=1}^{r}\dfrac{1}{1-t^{w_i}}.$$ 
Consequently, 
    $$\left(\prod_{i=1}^{r}(1-t^{d-w_i})\right)\HS_S(t)=\prod_{i=1}^{r}\dfrac{1-t^{d-w_i}}{1-t^{w_i}}.$$ 
According to Lemma \ref{lem:Suma}, the left-hand side of the previous expression is equal to 
    \begin{align*}
      \left(\prod_{i=1}^{r}(1-t^{d-w_i})\right)\HS_S(t)&=\left(1+\displaystyle\sum_{k=1}^{r}(-1)^k\sum_{\substack{i_1<\cdots<i_k\\1\leq i_j\leq r}}t^{kd-(w_{i_1}+\cdots+w_{i_k})}\right)\displaystyle\sum_{\alpha=0}^{\infty}D(\alpha)t^\alpha\\
      &=\displaystyle\sum_{\alpha=0}^{\infty}D(\alpha)t^\alpha+\displaystyle\sum_{k=1}^{r}(-1)^k\sum_{\substack{i_1<\cdots<i_k\\1\leq i_j\leq r}}\left(\displaystyle\sum_{\alpha=0}^{\infty}D(\alpha)t^{\alpha+kd-(w_{i_1}+\cdots+w_{i_k})}\right).
       \end{align*}
Adjusting the index of the second infinite sum, and using that $D(\alpha-a)=\dim_{\mathbb{C}}S(-a)_\alpha$, the last expression becomes:
       \begin{align*}
       &\displaystyle\sum_{\alpha=0}^{\infty}D(\alpha)t^\alpha+\displaystyle\sum_{k=1}^{r}(-1)^k\sum_{\substack{i_1<\cdots<i_k\\1\leq i_j\leq r}}\left(\displaystyle\sum_{\alpha=kd-w_{i_1}-\ldots-w_{i_k}}^{\infty}D(\alpha-(kd-w_{i_1}-\ldots-w_{i_k}))t^{\alpha}\right)\\
           &=\HS_S(t)+\displaystyle\sum_{k=1}^{r}(-1)^k\sum_{\substack{i_1<\cdots<i_k\\1\leq i_j\leq r}}\HS_{S(-(kd-w_{i_1}-\ldots-w_{i_k}))}(t)=\HS_{M_f}(t).
       \end{align*}
Therefore, $\HS_{M_f}(t)=\left(\displaystyle\prod_{i=1}^{r}(1-t^{d-w_i})\right)\HS_S(t)=\displaystyle\prod_{i=1}^{r}\dfrac{1-t^{d-w_i}}{1-t^{w_i}}.$\medskip
       
As $M_f$ is finite-dimensional, the Hilbert series $\HS_{M_f}(t)$ is a polynomial. Taking limits on the last expression we obtain,
    $$\mu(f)=\HS_{M_f}(1)=\displaystyle\lim_{t\rightarrow 1}\HS_{M_f}(t)=\displaystyle\lim_{t\rightarrow 1}\prod_{i=1}^{r}\dfrac{1-t^{d-w_i}}{1-t^{w_i}}.$$
      Since the limit of each term in the product exists, $\displaystyle\lim_{t\rightarrow1}\dfrac{1-t^{d-w_i}}{1-t^{w_i}}=\dfrac{d-w_i}{w_i}.$ 
       Hence  
  $$\mu(f)=\HS_{M_f}(1)=\displaystyle\lim_{t\rightarrow 1}\prod_{i=1}^{r}\dfrac{1-t^{d-w_i}}{1-t^{w_i}}=\prod_{i=1}^{r}\displaystyle\lim_{t\rightarrow 1}\dfrac{1-t^{d-w_i}}{1-t^{w_i}}=\prod_{i=1}^{r}\dfrac{d-w_i}{w_i}.$$
\end{proof}

\section*{Acknowledgements}

We thank Guillermo Peñafort for suggesting the idea of using the Koszul complex in the study of the Milnor-Orlik Theorem. We are also grateful to Hernán de Alba for discussing an early version of this manuscript and for his valuable suggestions.
\bibliographystyle{alpha}
\bibliography{references}

\makeatletter
\origsetaddresses   
\makeatother

\end{document}